\newtheorem{theorem}{Theorem}[section]
\newtheorem{proposition}[theorem]{Proposition}
\theoremstyle{definition}
\newtheorem{remark}[theorem]{Remark}
\numberwithin{equation}{section}
\newcounter{smallromans}
\newenvironment{romanenumerate}
{\begin{list}{{\normalfont\textrm{(\roman{smallromans})}}}%
    {\usecounter{smallromans}\setlength{\itemindent}{0cm}%
      \setlength{\leftmargin}{5.5ex}\setlength{\labelwidth}{5.5ex}%
      \setlength{\topsep}{0.2ex}\setlength{\partopsep}{0ex}%
      \setlength{\itemsep}{0.2ex}}}%
  {\end{list}}
\newcommand{\romanref}[1]{{\normalfont\textrm{(\ref{#1})}}}
\newcounter{smallalphs}
\renewcommand{\le}{\ensuremath{\leqslant}}
\newcommand{\N}{\mathbb{N}}
\newcommand{\Z}{\mathbb{Z}}
\newcommand{\R}{\mathbb{R}}
\newcommand{\C}{\mathbb{C}}
\newcommand{\K}{\mathbb{K}}
\renewcommand{\phi}{\ensuremath{\varphi}}
\renewcommand{\epsilon}{\ensuremath{\varepsilon}}
\newcommand{\smashw}[2][l]{{\text{\makebox[0pt][#1]{$#2$}}}}
\begin{document}
\title[Splittings of extensions and homological bidimension of
  $\mathscr{B}(E)$]{Splittings of extensions and homological
  bi\-dimen\-sion of the algebra of bounded operators on a Banach
  space}
\dedicatory{In memoriam: Uffe Haagerup (1949--2015)} 
\subjclass[2010]%
{Primary
  46H40, 
  46M18, 
  47L10; 
   Secondary 16E10, 
  16S70, 
46H10} 
\author[N.~J.~Laustsen]{Niels Jakob Laustsen} 
\author[R.~Skillicorn]{Richard Skillicorn} 
\address{Department of Mathematics and Statistics, Fylde College,
  Lancaster University, Lancaster LA1 4YF, United Kingdom.}
\email{n.laustsen@lancaster.ac.uk,\ r.skillicorn@lancaster.ac.uk}
\keywords{Bounded, linear operator; Banach space; Banach algebra;
  short-exact sequence; algebraic splitting; strong splitting;
  singular extension; admissible extension; pullback; second
  continuous Hochschild cohomology group; homological bidimension.}
\begin{abstract}
  We show that there exists a Banach space~$E$ such that: 
  \begin{itemize}
  \item the Banach algebra~$\mathscr{B}(E)$ of bounded, linear
    operators on~$E$ has a singular extension which splits
    algebraically, but it does not split strongly;
  \item the homological bidimension of~$\mathscr{B}(E)$ is at least
    two.
  \end{itemize}
  The first of these conclusions solves a natural  problem left open by
  Bade, Dales, and Lykova (\emph{Mem.\ Amer.\ Math.\ Soc.}~1999),
  while the second answers a question of Helemskii. The Ba\-nach
  space~$E$ that we use was originally introduced by Read
  (\emph{J.~London Math.\ Soc.}~1989).

  Nous d\'{e}montrons qu'il existe un espace de Banach tel que:
  \begin{itemize}
  \item l'alg\`{e}bre de Banach~$\mathscr{B}(E)$ des op\'{e}rateurs
    lin\'{e}aires born\'{e}s sur~$E$ a une extension singuli\`{e}re
    qui scinde alg\'{e}briquement mais qui ne scinde pas fortement;
  \item la bidimension homologique de~$\mathscr{B}(E)$ est au moins
    deux.
  \end{itemize}
  La premi\`{e}re de ces conclusions compl\`{e}te les r\'{e}sultats de
  Bade, Dales et Lykova (\emph{Mem.\ Amer.\ Math.\ Soc.}~1999), et la
  seconde r\'{e}pond \`{a} une question de Helemskii.  L'espace de
  Banach~$E$ a \'{e}t\'{e} introduit initialement par Read
  (\emph{J.~London Math.\ Soc.}~1989).
\end{abstract}
\maketitle
\section{Introduction and statement of results}%
\label{section1}
\noindent 
By an \emph{extension} of a Banach algebra~$\mathscr{B}$, we
understand a short-exact sequence of the form
\begin{equation}\label{shortexactseq} \spreaddiagramcolumns{2ex}%
    \xymatrix{\{0\}\ar[r] & \ker\phi\ar[r] &
      \mathscr{A}\ar^-{\displaystyle{\phi}}[r] & \mathscr{B}\ar[r] &
      \{0\},}
\end{equation} 
where~$\mathscr{A}$ is a Banach algebra
and~$\phi\colon\mathscr{A}\to\mathscr{B}$ is a continuous, surjective
algebra homomorphism. The ex\-ten\-sion \emph{splits algebraically}
(respectively, \emph{splits strongly}) if there is an algebra
homo\-mor\-phism (respectively, a continuous algebra homomorphism)
$\rho\colon\mathscr{B}\to\mathscr{A}$ which is a right in\-verse
of~$\phi$, in the sense that $\phi\circ\rho$ is the identity map
on~$\mathscr{B}$.  We say that~\eqref{shortexactseq} is
\emph{admissible} if~$\phi$ has a 
right in\-verse which is bounded and linear, or, equivalently, if
$\ker\phi$ is complemented in~$\mathscr{A}$ as a Banach space.  Every
extension which splits strongly is obviously admissible. The
extension~\eqref{shortexactseq} is \emph{singular} if~$\ker\phi$ has
trivial multiplication, in the sense that $ab=0$ whenever
$a,b\in\ker\phi$.

Bade, Dales, and Lykova~\cite{bdl} carried out a comprehensive study
of extensions of Banach algebras, focusing in particular on the
following question:
\begin{quote}
\textsl{For which (classes of) Banach algebras~$\mathscr{B}$ is it
  true that every extension of the form}~\eqref{shortexactseq}
\textsl{which splits algebraically also splits strongly?}
\end{quote}
This question can be viewed as a variation on the theme of automatic
continuity.  Of course, its answer is positive whenever the Banach
algebra~$\mathscr{B}$ has the property that \textsl{every} alge\-bra
homomorphism from~$\mathscr{B}$ into a Banach algebra is continuous. A
classical theorem of Johnson~\cite{johnson} states that the Banach
algebra $\mathscr{B} = \mathscr{B}(E)$ of all bounded operators on a
Banach space~$E$ has this property whenever~$E$ is isomorphic to its
Cartesian square $E\oplus E$.

Johnson's result, how\-ever, does not extend to all Banach spaces
because Read~\cite{read} has constructed a Banach space~$E_{\text{R}}$
such that there exists a discontinuous derivation (and hence a
discontinuous algebra homomorphism) from~$\mathscr{B}(E_{\text{R}})$.
Dales, Loy, and Willis~\cite{dlw} have subsequently given an example
of a Banach space~$E_{\text{DLW}}$ such that all derivations
from~$\mathscr{B}(E_{\text{DLW}})$ are continuous, but under the
assump\-tion of the Continuum Hypothesis,
$\mathscr{B}(E_{\text{DLW}})$~admits a discontinuous algebra
homo\-mor\-phism into a Banach algebra.

These results still leave open the above question of Bade, Dales, and
Lykova in the case of~$\mathscr{B}(E)$ for a general Banach space~$E$:
is it true that every extension of~$\mathscr{B}(E)$ which splits
algebraically also splits strongly?  Our first result answers
this question in the negative.

\begin{theorem}\label{mainthm}
There exists a continuous, surjective algebra homomorphism~$\phi$
from a unital Banach algebra~$\mathscr{A}$
onto~$\mathscr{B}(E_{\normalfont{\text{R}}})$,
where~$E_{\normalfont{\text{R}}}$ denotes the above-mentioned Banach
space of Read, such that the
extension
\[ \spreaddiagramcolumns{2ex}\xymatrix{\{0\}\ar[r] &
   \ker\phi\ar[r] & \mathscr{A}\ar^-{\displaystyle{\phi}}[r] &
   \mathscr{B}(E_{\normalfont{\text{R}}})\ar[r] & \{0\}} \] is
singular and splits algebraically, but it is not admissible, and so it
does not split strongly.
\end{theorem}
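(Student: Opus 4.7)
The strategy is to convert Read's discontinuous derivation into a non-admissible singular extension by a graph-closure construction. Let $D\colon\mathscr{B}(E_{\text{R}})\to X$ be a discontinuous derivation into a suitable Banach $\mathscr{B}(E_{\text{R}})$-bimodule~$X$ supplied by Read's construction. Form the semidirect-product Banach algebra $\mathscr{B}(E_{\text{R}})\ltimes X$ with product $(T_1,x_1)(T_2,x_2)=(T_1T_2,\,T_1\cdot x_2+x_1\cdot T_2)$, and consider the graph $G=\{(T,D(T)):T\in\mathscr{B}(E_{\text{R}})\}$. Because $D$ is a derivation (so, in particular, $D(I)=0$), $G$ is a unital subalgebra. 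I would set $\mathscr{A}:=\overline{G}$; joint continuity of multiplication in $\mathscr{B}(E_{\text{R}})\ltimes X$ makes $\mathscr{A}$ a closed unital subalgebra, hence itself a unital Banach algebra. The first-coordinate projection $\phi\colon\mathscr{A}\to\mathscr{B}(E_{\text{R}})$, $(T,x)\mapsto T$, is then a continuous, surjective, unital algebra homomorphism.

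A direct calculation identifies $\ker\phi$ with the ``singularity submodule''
\[\mathfrak{S}(D):=\{x\in X:\,\exists\,T_n\in\mathscr{B}(E_{\text{R}})\text{ with }T_n\to0\text{ and }D(T_n)\to x\},\]
embedded as $\{0\}\times\mathfrak{S}(D)$ in $\mathscr{A}$. Since products of elements of the form $(0,x)$ vanish in the semidirect product, $\ker\phi$ has trivial multiplication and the extension is singular. The map $\rho\colon\mathscr{B}(E_{\text{R}})\to\mathscr{A}$, $\rho(T):=(T,D(T))$, lands in $G\subseteq\mathscr{A}$, is an algebra homomorphism by the derivation identity, and satisfies $\phi\circ\rho=\mathrm{id}$; so the extension splits algebraically. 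The discontinuity of~$D$ makes $\rho$ discontinuous, but this alone does not yet yield non-admissibility, as the trivial ``zero'' splitting $T\mapsto(T,0)$ need not lie in $\mathscr{A}$.

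The technical heart of the proof is to show that $\mathfrak{S}(D)$ is not complemented in~$\mathscr{A}$, so that no continuous linear right inverse of $\phi$ exists. Any such inverse would have the form $\sigma(T)=(T,h(T))$ with $h\colon\mathscr{B}(E_{\text{R}})\to X$ bounded linear and $h-D$ taking values in $\mathfrak{S}(D)$; equivalently, the induced map $\bar{D}\colon\mathscr{B}(E_{\text{R}})\to X/\mathfrak{S}(D)$, which is automatically continuous by a closed-graph argument, would admit a bounded linear lift through the quotient map $X\twoheadrightarrow X/\mathfrak{S}(D)$. Ruling this out is the main obstacle and is where Read's specific construction must be exploited: one must arrange, using the detailed structure of $E_{\text{R}}$ and of~$D$, that the Banach-space sequence $0\to\mathfrak{S}(D)\to X\to X/\mathfrak{S}(D)\to0$ obstructs such a lift---for instance, by choosing (or taking a pullback of) the bimodule $X$ so that $\mathfrak{S}(D)$ is \emph{not} complemented in~$X$ and so that $\bar{D}$ witnesses this non-complementation. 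I expect verifying this to require a careful Banach-space analysis of the ideals and quotients arising in Read's construction of $E_{\text{R}}$, possibly combined with a pullback along an auxiliary non-splitting short exact sequence of Banach spaces.
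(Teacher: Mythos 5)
Your construction does produce a singular extension of $\mathscr{B}(E_{\text{R}})$ that splits algebraically: the closure of the graph of $D$ in $\mathscr{B}(E_{\text{R}})\ltimes X$ is $\mathscr{A}=\{(T,x): x-D(T)\in\mathfrak{S}(D)\}$, its kernel over $\mathscr{B}(E_{\text{R}})$ is $\{0\}\times\mathfrak{S}(D)$ with $\mathfrak{S}(D)$ the separating space, and $T\mapsto(T,D(T))$ is an algebraic splitting. But the one property the theorem actually asserts beyond this --- non-admissibility --- is precisely what you leave unproved, and it is not a routine verification that can be deferred: as you yourself observe, a bounded linear right inverse of $\phi$ exists if and only if the automatically continuous induced map $\bar D\colon\mathscr{B}(E_{\text{R}})\to X/\mathfrak{S}(D)$ admits a bounded linear lift to $X$, and such a lift always exists when $\mathfrak{S}(D)$ is complemented in $X$. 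This kills the most natural instantiation: the discontinuous derivations that Read's construction directly supplies are point derivations, obtained by composing a discontinuous linear functional on $\ell_2(\N)$ with the map $\psi_0$ appearing in the paper's proof of Theorem~\ref{mainthmH2BER}, so they take values in a one-dimensional bimodule, $\mathfrak{S}(D)$ is finite-dimensional and hence complemented, and your extension is admissible. To make your scheme work you would have to engineer a new derivation into a new infinite-dimensional bimodule in which the separating space is uncomplemented \emph{and} the specific lifting problem for $\bar D$ is obstructed; no recipe for this is given, and it is the entire content of the theorem.

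The paper's route is genuinely different and sidesteps derivations altogether. Its hard input is Theorem~\ref{WEBEsplitexact}: the quotient map $\psi\colon\mathscr{B}(E_{\text{R}})\to\ell_2(\N)^{\sim}$ with kernel $\mathscr{W}(E_{\text{R}})$ admits a \emph{continuous} algebra-homomorphism right inverse. Separately, it manufactures a singular, algebraically split, non-admissible extension of $\ell_2(\N)^{\sim}$ from pure Banach-space theory: a quotient map $q\colon\ell_1(\N)\to\ell_2(\N)$ has uncomplemented kernel (no subspace of $\ell_1$ is isomorphic to $\ell_2$), and since both spaces carry the trivial product, any linear right inverse of $q$ is automatically multiplicative. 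Proposition~\ref{pullbacksandsplittingsofextensions} then pulls this extension back along $\psi$ to an extension of $\mathscr{B}(E_{\text{R}})$ with the same splitting behaviour. The pullback you gesture at in your final sentence is in this spirit, but the decisive ingredients you are missing are the strong splitting of $\psi$ and the idea of importing the non-complementation from the classical pair $(\ell_1,\ell_2)$ rather than trying to extract it from the derivation itself.
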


We do not know whether Read's Banach space~$E_{\normalfont{\text{R}}}$
is essential for this result. Due to the dearth of examples of Banach
spaces~$E$ for which~$\mathscr{B}(E)$ admits a discontinuous
homomorphism into a Banach algebra, $E = E_{\normalfont{\text{R}}}$
was the most obvious place to start our investigations, and it led to
the answer that we were looking for.

Before we can state our second result, we require some more
terminology. Let~$n\in\N_0$. A Banach algebra~$\mathscr{B}$ has
\emph{homological bidimension at least~$n$} if there exists a Banach
$\mathscr{B}$\nobreakdash-bi\-mod\-ule~$X$ such that the
$n^{\text{th}}$ continuous Hochschild cohomology group
$\mathscr{H}^n(\mathscr{B}, X)$ of~$\mathscr{B}$ with coefficients
in~$X$ is non-zero.  This notion is the topological counterpart of a
long established notion in pure algebra. It was introduced by
Helemskii, who, together with his students, has studied it for many
classes of Banach algebras; see~\cite{helemskii} for an overview.  

A Banach algebra~$\mathscr{B}$ which has homological bidimension zero
(so that $\mathscr{H}^1(\mathscr{B}, X) = \{0\}$ for every Banach
$\mathscr{B}$\nobreakdash-bi\-mod\-ule~$X$) is \emph{contractible}.
It is conjectured that a Banach algebra is contractible (if and) only
if it is finite-dimensional and semi\-simple.

If true, this conjecture would imply that the Banach
algebra~$\mathscr{B}(E)$ has homological bidimension at least one for
every infinite-dimensional Banach space~$E$; see
\cite[Proposition~5.1]{johnsonSymmAm} for a strong partial result in
this direction.  It appears to be unknown if a Banach algebra of the
form~$\mathscr{B}(E)$ for a (necessarily in\-finite-di\-men\-sional)
Banach space~$E$ can have homological bidimension at least two, a
problem that goes back to Helemskii's seminar at Moscow State
University. In the case where~$E$ is a Hilbert space, this problem is
stated explicitly as \cite[Problem~21]{heProblems}; see also
\cite[p.~27]{bdl}.  We shall show that the homological bidimension
of~$\mathscr{B}(E)$ can be two or greater, again using the
above-mentioned Banach space~$E_{\normalfont{\text{R}}}$ of Read.

\begin{theorem}\label{mainthmH2BER}
There exist a one-dimensional Banach
$\mathscr{B}(E_{\text{\normalfont{R}}})$-bimodule~$X$ and a linear
in\-jec\-tion from the Banach algebra~$\mathscr{B}(\ell_2(\N))$ of
bounded operators on the Hilbert space~$\ell_2(\N)$ into the second
continuous Hochschild cohomology group
of~$\mathscr{B}(E_{\text{\normalfont{R}}})$ with coefficients in~$X$.
Hence~$\mathscr{B}(E_{\text{\normalfont{R}}})$ has homological
bidimension at least two.
\end{theorem}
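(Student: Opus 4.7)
The plan is to derive Theorem~\ref{mainthmH2BER} from Theorem~\ref{mainthm} via the standard dictionary between \emph{admissible} singular extensions of a Banach algebra and its second continuous Hochschild cohomology. Recall that if $X$ is a Banach $\mathscr{B}(E_{\text{R}})$-bimodule, then equivalence classes of admissible singular extensions $\{0\}\to X\to \mathscr{D}\to \mathscr{B}(E_{\text{R}})\to \{0\}$ correspond bijectively to elements of $\mathscr{H}^2(\mathscr{B}(E_{\text{R}}), X)$: a bounded linear section $\sigma$ yields the $2$-cocycle $\omega_\sigma(a,b) = \sigma(a)\sigma(b) - \sigma(ab)$, and the resulting class vanishes precisely when the extension splits strongly. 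Producing an injection $\mathscr{B}(\ell_2(\N))\hookrightarrow \mathscr{H}^2(\mathscr{B}(E_{\text{R}}), X)$ is therefore equivalent to producing a $\mathscr{B}(\ell_2(\N))$-indexed family of pairwise inequivalent admissible singular extensions of $\mathscr{B}(E_{\text{R}})$ by a one-dimensional bimodule~$X$, of which none but the one at $T=0$ splits strongly.

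My proposed source for this family is pushout of the non-admissible extension $\{0\}\to I\to \mathscr{A}\to \mathscr{B}(E_{\text{R}})\to \{0\}$ supplied by Theorem~\ref{mainthm}. Given any continuous $\mathscr{B}(E_{\text{R}})$-bimodule homomorphism $\pi\colon I\to X$ into a Banach bimodule $X$, the pushout along $\pi$ is a singular extension of $\mathscr{B}(E_{\text{R}})$ by $X$ that still splits algebraically, via the homomorphism induced by the algebraic section $\rho$ produced in Theorem~\ref{mainthm}. Whenever this pushout is admissible, it represents a class in $\mathscr{H}^2(\mathscr{B}(E_{\text{R}}), X)$. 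The strategy is then to identify a one-dimensional Banach $\mathscr{B}(E_{\text{R}})$-bimodule~$X$ together with a linear map $\Phi\colon\mathscr{B}(\ell_2(\N))\to \text{Hom}(I,X)$ into the space of continuous bimodule morphisms, in such a way that each pushout along $\Phi(T)$ is admissible and represents a distinct cohomology class. The existence of $X$ and $\Phi$ should fall out of the concrete construction of $\mathscr{A}$ and $I$ underlying Theorem~\ref{mainthm}, since the discontinuous homomorphism on $\mathscr{B}(E_{\text{R}})$ that powers that construction is itself built from a natural $\mathscr{B}(\ell_2(\N))$-structure inside Read's space, inducing a pairing of $\mathscr{B}(\ell_2(\N))$ with the kernel $I$.

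The main obstacle will be the injectivity of the induced map $T\mapsto [\omega_{\Phi(T)}]\in \mathscr{H}^2(\mathscr{B}(E_{\text{R}}), X)$. Because every pushout still splits algebraically, this reduces to showing that for each non-zero $T\in\mathscr{B}(\ell_2(\N))$ the pushout extension fails to split strongly; equivalently, that no bounded linear section of $\mathscr{A}\to \mathscr{B}(E_{\text{R}})$ exists \emph{modulo} $\ker\Phi(T)$. I expect to prove this by a pairing argument: the family $\Phi(\mathscr{B}(\ell_2(\N)))$ should separate $I$ enough that the simultaneous existence of a bounded linear section modulo $\ker\Phi(T)$ for a non-zero $T$ would produce a bounded linear section of the original extension, contradicting Theorem~\ref{mainthm}. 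Making this pairing explicit and controlling the interplay between the bimodule quotients $I/\ker\Phi(T)$ and complementability in~$\mathscr{A}$ is the delicate step, and is where the specific structure of Read's space $E_{\text{R}}$ (and of the auxiliary algebra $\mathscr{C}$ appearing in Theorem~\ref{mainthm}) must be used in a genuinely quantitative way.
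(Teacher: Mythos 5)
There is a genuine gap here: the route through Theorem~\ref{mainthm} cannot work, because the extension produced there carries no continuous-cohomological information that survives passage to a one-dimensional bimodule. In that construction the failure to split strongly is caused \emph{entirely} by non-admissibility: the kernel is (a copy of) $\ker q$ for a bounded surjection $q\colon\ell_1(\N)\to\ell_2(\N)$, the bimodule action on it is by the scalar character $\theta=$ ``unit coordinate'', and the extension already splits algebraically via any linear right inverse of $q$. Now push out along a continuous bimodule morphism $\pi\colon\ker q\to X$ with $\dim X<\infty$ (continuity is forced if the pushout is to be a Banach algebra, and any such $\pi$ is just a tuple of continuous functionals, since the module action is scalar). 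Extending those functionals to all of $\ell_1(\N)$ by Hahn--Banach, one checks directly that the pushout of the extension $\{0\}\to\ker q\to\ell_1(\N)^{\sim}\to\ell_2(\N)^{\sim}\to\{0\}$ along $\pi$ is isomorphic to the trivial (semidirect-product) extension, hence splits strongly; pulling back along $\psi$ preserves strong splitting by Proposition~\ref{pullbacksandsplittingsofextensions}\romanref{pullbacksandsplittingsofextensions2}. So \emph{every} class you can manufacture this way is zero in $\mathscr{H}^2(\mathscr{B}(E_{\text{\normalfont{R}}}),X)$, and the map $T\mapsto[\omega_{\Phi(T)}]$ is identically zero, not injective. (Separately, you never exhibit the map $\Phi\colon\mathscr{B}(\ell_2(\N))\to\operatorname{Hom}(I,X)$; there is no natural candidate, and the point above shows that none could help.)

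What is actually needed is a supply of \emph{admissible} singular extensions by a one-dimensional module that fail to split strongly, and the operator $U\in\mathscr{B}(\ell_2(\N))$ must enter as a \emph{bilinear} pairing, not as a linear functional on a kernel. The paper does this directly: writing $\psi=\psi_0+\theta(\cdot)1$ with $\psi_0\colon\mathscr{B}(E_{\text{\normalfont{R}}})\to\ell_2(\N)$ bounded linear and $\theta$ a character, it sets $X=\K$ with actions via $\theta$ and defines the $2$-cocycle $\Upsilon_U(S,T)=\langle U\psi_0(S),\psi_0(T)\rangle$; the identity $\psi_0(ST)=\theta(S)\psi_0(T)+\theta(T)\psi_0(S)$ makes this a cocycle, and composing with the continuous right inverse $\rho$ of $\psi$ (which satisfies $\rho(x)\rho(y)=0$) shows that $\Upsilon_U$ is a coboundary only if $U=0$. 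Equivalently (as in the paper's closing Remark), one can realise $\Upsilon_U$ as the extension of $\ell_2(\N)^{\sim}$ built on $\K\oplus\ell_2(\N)$ with product $(\lambda,x)(\mu,y)=(\langle Ux,y\rangle,0)$ and pull it back along $\psi$. Either way, the essential input is Theorem~\ref{WEBEsplitexact} (the strong splitting over $\ell_2(\N)^{\sim}$), not Theorem~\ref{mainthm}; I would redirect your argument accordingly.
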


\section{Proofs of Theorems~\ref{mainthm} and~\ref{mainthmH2BER}}
\noindent
The proofs of Theorems~\ref{mainthm} and~\ref{mainthmH2BER} both rely
on a strengthening of the main technical result in Read's paper, as it
is stated in \cite[Section~4]{read}. This strengthening involves two
further pieces of notation.  First, we denote
by~$\mathscr{W}(E_{\text{R}})$ the ideal of weakly compact operators
on the Banach space~$E_{\text{R}}$. Second, we endow the Hilbert space
$\ell_2(\N)$ with the trivial multiplication and write
$\ell_2(\N)^{\sim}$ for its unitization; that is, $\ell_2(\N)^{\sim} =
\ell_2(\N)\oplus\K 1$ as a vector space (where~$\K$ denotes the scalar
field, either~$\R$ or~$\C$, and~$1$ is the formal identity that we
adjoin), while the product and the norm on~$\ell_2(\N)^{\sim}$ are
given by
\[ (x + \lambda
1)(y+\mu 1) =
\lambda y+\mu x+ \lambda\mu 1\quad
\text{and}\quad \|x + \lambda 1\| = \|x\|
+ |\lambda|\qquad (x,y\in\ell_2(\N),\, \lambda,\mu\in\mathbb{K}). \]

\begin{theorem}\label{WEBEsplitexact} There exists a continuous, 
surjective algebra homomorphism~$\psi$ 
from~$\mathscr{B}(E_{\normalfont{\text{R}}})$
onto~$\ell_2(\N)^{\sim}$ with $\ker\psi =
\mathscr{W}(E_{\normalfont{\text{R}}})$ such that the extension 
\[ \spreaddiagramcolumns{2ex}\xymatrix{\{0\}\ar[r] &
    \mathscr{W}(E_{\normalfont{\text{R}}})\ar[r] &
    \mathscr{B}(E_{\normalfont{\text{R}}})\ar^-{\displaystyle{\psi}}[r]
    & \ell_2(\N)^{\sim}\ar[r] & \{0\}}
\] splits strongly.
\end{theorem}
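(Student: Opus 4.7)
The plan is to extract from Read's main technical result \cite[Section~4]{read} both the algebra homomorphism~$\psi$ and a continuous right inverse~$\rho$. Read's construction provides an explicit sequence of bounded operators $(T_n)_{n\in\N}$ on~$E_{\text{R}}$, which are designed precisely so that, together with the identity operator~$I$, they generate a dense subalgebra of the quotient $\mathscr{B}(E_{\text{R}})/\mathscr{W}(E_{\text{R}})$ whose Banach-algebra structure is that of~$\ell_2(\N)^{\sim}$. The multiplicative relations that make this identification work are that each product $T_nT_m$ is weakly compact, while $\|\sum_n \alpha_n T_n + \lambda I + \mathscr{W}(E_{\text{R}})\|$ is comparable to $\|(\alpha_n)\|_{\ell_2} + |\lambda|$. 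The assignment sending the class of $\sum_n \alpha_n T_n + \lambda I$ to $\sum_n \alpha_n e_n + \lambda 1$ therefore extends to a continuous surjective algebra isomorphism from $\mathscr{B}(E_{\text{R}})/\mathscr{W}(E_{\text{R}})$ onto~$\ell_2(\N)^{\sim}$; composing with the quotient map produces the homomorphism~$\psi$, whose kernel is $\mathscr{W}(E_{\text{R}})$ by construction.

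The nontrivial part is the strong splitting, and this is where the \emph{strengthening} of Read's result promised in the opening paragraph of the section is required. I would refine Read's construction to arrange that the operators $T_n$ satisfy $T_n T_m = 0$ in $\mathscr{B}(E_{\text{R}})$ itself --- not merely modulo $\mathscr{W}(E_{\text{R}})$ --- and that the one-sided norm estimate $\|\sum_n \alpha_n T_n\|_{\mathscr{B}(E_{\text{R}})} \leq C\,\|(\alpha_n)\|_{\ell_2}$ holds for some absolute constant~$C$. Granting both refinements, I define
\[ \rho\colon x + \lambda 1 \mapsto \sum_{n\in\N} \langle x, e_n\rangle T_n + \lambda I \qquad (x\in\ell_2(\N),\ \lambda\in\K). \]
The norm estimate guarantees that $\rho$ is continuous, the identity $\psi\circ\rho = \mathrm{id}_{\ell_2(\N)^{\sim}}$ is immediate from $\psi(T_n) = e_n$ and $\psi(I) = 1$, and the multiplicative identity $\rho\bigl((x+\lambda 1)(y+\mu 1)\bigr) = \rho(x+\lambda 1)\rho(y+\mu 1)$ collapses to the vanishing $T_nT_m = 0$ of every cross term together with the obvious identities involving~$I$.

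The principal obstacle is thus the refinement of Read's construction itself: producing operators $T_n$ for which $T_nT_m = 0$ holds on the nose, while retaining an $\ell_2$-type norm comparison, demands a close inspection of the combinatorial and geometric details of the basis of~$E_{\text{R}}$ and of how Read's operators act on it. Read's original argument only ensures that $T_nT_m$ is weakly compact and yields the norm comparison modulo $\mathscr{W}(E_{\text{R}})$; lifting these conclusions to genuine identities in $\mathscr{B}(E_{\text{R}})$ is the technical heart of the proof. Once it is in place, the remaining verifications that~$\psi$ is a continuous surjective algebra homomorphism with kernel $\mathscr{W}(E_{\text{R}})$ and that~$\rho$ is a continuous algebra homomorphism satisfying $\psi\circ\rho=\mathrm{id}_{\ell_2(\N)^{\sim}}$ are essentially formal.
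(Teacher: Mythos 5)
The paper does not actually prove Theorem~\ref{WEBEsplitexact} in the text: it states that the proof ``relies on a careful analysis of Read's construction'' and defers all details to the companion paper~\cite{LS}. Your outline is consistent with that indicated strategy --- lift the generators of $\ell_2(\N)^{\sim}$ to genuine operators $T_n$ on $E_{\text{R}}$ with exact multiplicative relations and an $\ell_2$ upper norm estimate, and read off the splitting map $\rho$ --- and the formal verifications you describe at the end (continuity of $\rho$, $\psi\circ\rho=\mathrm{id}$, multiplicativity reducing to the vanishing of cross terms) are indeed routine once such $T_n$ exist.

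However, as a proof the proposal has a genuine gap, and you have located it yourself: the assertion that one can ``refine Read's construction'' so that $T_nT_m=0$ in $\mathscr{B}(E_{\text{R}})$ itself (note this must include $n=m$, so each $T_n$ is square-zero) and so that $\bigl\|\sum_n\alpha_nT_n\bigr\|\le C\|(\alpha_n)\|_{\ell_2}$, with the sums actually convergent in $\mathscr{B}(E_{\text{R}})$, is not a lemma you can invoke --- it is essentially a restatement of the theorem. A strong splitting of this singular extension is \emph{equivalent} to the existence of such a system of lifts, so granting the refinement grants the conclusion. Everything difficult lives inside Read's construction of $E_{\text{R}}$ (the James-type direct sum, the specific operators witnessing the non-weakly-compact quotient, and the verification that their products genuinely vanish rather than merely falling into $\mathscr{W}(E_{\text{R}})$), and the proposal does not engage with any of it. A complete argument would have to exhibit the $T_n$ concretely in terms of Read's basis and prove both the exact relations and the norm estimate; that is precisely the content the authors postpone to~\cite{LS}.
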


The proof of this result relies on a careful analysis of Read's
construction; full details will appear in \cite{LS}.

In order to prove Theorem~\ref{mainthm}, we require another
tool, namely the \emph{pullback} of a diagram of the form
\begin{equation}\label{prepullbackdiagram}
  \spreaddiagramcolumns{2ex}%
\xymatrix{\mathscr{A}\ar^-{\displaystyle{\alpha}}[r] & 
 \mathscr{C} & \mathscr{B},\ar_-{\displaystyle{\beta}}[l]}
\end{equation}
where~$\mathscr{A}$, $\mathscr{B}$, and~$\mathscr{C}$ are Banach
algebras, and $\alpha\colon\mathscr{A}\to\mathscr{C}$ and
$\beta\colon\mathscr{B}\to\mathscr{C}$ are continuous algebra
homomorphisms. We can define the pullback of this diagram explicitly
by the formula
\begin{equation}\label{expldefnpullback} 
\mathscr{D} = \{(a,b)\in\mathscr{A}\oplus\mathscr{B} : \alpha(a) =
\beta(b) \}, \end{equation} where~$\mathscr{A}\oplus\mathscr{B}$
denotes the direct sum of the Banach algebras~$\mathscr{A}$ and~$\mathscr{B}$.  Being a
closed subalgebra of~$\mathscr{A}\oplus\mathscr{B}$, $\mathscr{D}$~is
a Banach algebra in its own right.  Let
\begin{equation}\label{defngammadelta}
 \gamma\colon\ (a,b)\mapsto a,\quad
 \mathscr{D}\to\mathscr{A},\qquad\text{and}\qquad
 \delta\colon\ (a,b)\mapsto b,\quad
 \mathscr{D}\to\mathscr{B}, \end{equation} be the restrictions
to~$\mathscr{D}$ of the coordinate projections. Then
$\alpha\circ\gamma = \beta\circ\delta$, and it can be shown
that~$\mathscr{D}$, together with the continuous algebra
homomorphisms~$\gamma$ and~$\delta$, has the following universal
property, so that they form a pullback of~\eqref{prepullbackdiagram} in
the categorical sense: for every Banach algebra~$\mathscr{E}$ and each
pair $\xi\colon\mathscr{E}\to\mathscr{A}$ and
$\eta\colon\mathscr{E}\to\mathscr{B}$ of continuous algebra
homomorphisms satisfying $\alpha\circ\xi = \beta\circ\eta$, there is a
unique continuous algebra homomorphism
$\theta\colon\mathscr{E}\to\mathscr{D}$ such that the diagram
\begin{equation*}
  \spreaddiagramcolumns{.5ex}\spreaddiagramrows{-.5ex}%
  \xymatrix{%
  \mathscr{E}\ar^-{\displaystyle{\xi}}[rrrd]%
  \ar@{-->}^(0.63){\displaystyle{\theta}}[rd]%
  \ar_-{\displaystyle{\eta}}[rddd]\\ 
  &\mathscr{D}\ar_-{\displaystyle{\gamma}}[rr]%
  \ar^-{\displaystyle{\delta}}[dd]
  &&\mathscr{A}\ar^-{\displaystyle{\alpha}}[dd]\\ \\
  &\mathscr{B}\ar^-{\displaystyle{\beta}}[rr]
  &&\mathscr{C}}
\end{equation*}
is commutative.

We now come to our key result, which establishes a connection between
extensions and pull\-backs.

\begin{proposition}\label{pullbacksandsplittingsofextensions}
Let~$\mathscr{A}$, $\mathscr{B}$, and~$\mathscr{C}$ be Banach algebras
such that there are extensions
\begin{equation}\label{ext1} \spreaddiagramcolumns{2ex}%
    \xymatrix{\{0\}\ar[r] & \ker\alpha\ar[r] &
      \mathscr{A}\ar^-{\displaystyle{\alpha}}[r] & \mathscr{C}\ar[r] &
      \{0\}}
\end{equation} 
and 
\begin{equation}\label{ext2} \spreaddiagramcolumns{2ex}%
    \xymatrix{\{0\}\ar[r] & \ker\beta\ar[r] &
      \mathscr{B}\ar^-{\displaystyle{\beta}}[r] & \mathscr{C}\ar[r] &
      \{0\}\smashw{,}}
\end{equation}
and define $\mathscr{D}$, $\gamma$, and $\delta$
by~\eqref{expldefnpullback} and~\eqref{defngammadelta},
above. Then~$\delta$ is surjective, and the following statements
concerning the extension
\begin{equation}\label{ext3} \spreaddiagramcolumns{2ex}%
    \xymatrix{\{0\}\ar[r] & \ker\delta\ar[r] &
      \mathscr{D}\ar^-{\displaystyle{\delta}}[r] & \mathscr{B}\ar[r] &
      \{0\}}
\end{equation}
 hold true:
\begin{romanenumerate}
\item\label{pullbacksandsplittingsofextensions1} \eqref{ext3} is
  singular if and only if~\eqref{ext1} is singular.
\item\label{pullbacksandsplittingsofextensions2} Suppose
  that~\eqref{ext2} splits strongly (respectively, splits
  algebraically, is admissible).  Then \eqref{ext3} splits strongly
  (respectively, splits algebraically, is admissible) if and only
  if~\eqref{ext1} splits strongly (respectively, splits algebraically,
  is admissible).
\end{romanenumerate}
\end{proposition}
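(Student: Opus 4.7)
The plan is to first establish surjectivity of~$\delta$ and identify~$\ker\delta$ with~$\ker\alpha$, then deduce~\romanref{pullbacksandsplittingsofextensions1} and handle both directions of~\romanref{pullbacksandsplittingsofextensions2} by direct construction. Surjectivity of~$\delta$ follows immediately from that of~$\alpha$: given $b\in\mathscr{B}$, pick $a\in\mathscr{A}$ with $\alpha(a)=\beta(b)$, so that $(a,b)\in\mathscr{D}$ and $\delta(a,b)=b$. A pair $(a,0)$ lies in~$\mathscr{D}$ precisely when $a\in\ker\alpha$, so $(a,0)\mapsto a$ identifies~$\ker\delta$ isometrically and as a Banach algebra with the closed subalgebra~$\ker\alpha$ of~$\mathscr{A}$. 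Since multiplication on~$\mathscr{D}$ is coordinatewise, $\ker\delta$ has trivial multiplication if and only if~$\ker\alpha$ does, giving~\romanref{pullbacksandsplittingsofextensions1}.

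For~\romanref{pullbacksandsplittingsofextensions2}, I would treat the three splitting notions uniformly, using the fact that each is preserved under composition within its own category (continuous algebra homomorphisms, algebra homomorphisms, or bounded linear maps). From a right inverse $\rho\colon\mathscr{C}\to\mathscr{A}$ of~$\alpha$ of a given type, the formula $\tau(b)=(\rho(\beta(b)),b)$ defines a right inverse $\tau\colon\mathscr{B}\to\mathscr{D}$ of~$\delta$ of the same type: the identity $\alpha(\rho(\beta(b)))=\beta(b)$ places $\tau(b)$ in~$\mathscr{D}$, the equation $\delta\circ\tau=\mathrm{id}_{\mathscr{B}}$ is clear, and the regularity of~$\tau$ is inherited from those of~$\rho$ and~$\beta$. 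Notably, this half requires no assumption on~\eqref{ext2}.

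The converse is where the hypothesis on~\eqref{ext2} becomes essential. Given a right inverse~$\tau$ of~$\delta$ and a right inverse $\sigma\colon\mathscr{C}\to\mathscr{B}$ of~$\beta$ of the same type (supplied by the assumed splitting of~\eqref{ext2}), I would define $\rho=\gamma\circ\tau\circ\sigma$ and verify $\alpha\circ\rho=\alpha\circ\gamma\circ\tau\circ\sigma=\beta\circ\delta\circ\tau\circ\sigma=\beta\circ\sigma=\mathrm{id}_{\mathscr{C}}$; the regularity of~$\rho$ follows from that of its factors. The main item requiring care is matching the regularity classes consistently in all three cases: in particular, in the admissible case the splittings are merely bounded and linear, so the categorical universal property of the pullback (which applies only to continuous algebra homomorphisms) cannot be invoked directly, and the explicit formulae above do the work instead. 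Beyond this bookkeeping I do not foresee a serious obstacle; the proposition is essentially a formal consequence of the explicit description~\eqref{expldefnpullback}.
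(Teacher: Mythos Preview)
Your proposal is correct and follows essentially the same approach as the paper: identify $\ker\delta\cong\ker\alpha$ via~$\gamma$ for part~\romanref{pullbacksandsplittingsofextensions1}, and for part~\romanref{pullbacksandsplittingsofextensions2} use the explicit formulae $\tau(b)=(\rho(\beta(b)),b)$ and $\gamma\circ\tau\circ\sigma$ for the two directions (the paper merely swaps the names~$\rho$ and~$\sigma$). Your additional remark that the implication from~\eqref{ext1} to~\eqref{ext3} does not use the splitting hypothesis on~\eqref{ext2} is correct and slightly sharper than what the paper states explicitly.
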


\begin{proof}
The surjectivity of~$\alpha$ implies that~$\delta$ is surjective, so
that~\eqref{ext3} is indeed an extension.

\romanref{pullbacksandsplittingsofextensions1}.  The restriction
of~$\gamma$ to~$\ker\delta$ is an isomorphism onto~$\ker\alpha$, and
the conclusion follows.

\romanref{pullbacksandsplittingsofextensions2}. Let
$\rho\colon\mathscr{C}\to\mathscr{B}$ be a continuous algebra
homomorphism which is a right inverse of~$\beta$. 

$\Rightarrow$. Suppose that $\tau\colon\mathscr{B}\to\mathscr{D}$ is a
continuous algebra homomorphism which is a right inverse
of~$\delta$. Then a direct calculation shows that the continuous
algebra homomorphism $\gamma\circ\tau\circ\rho$ is a right inverse
of~$\alpha$, so that~\eqref{ext1} splits strongly.

$\Leftarrow$. Suppose that $\sigma\colon\mathscr{C}\to\mathscr{A}$ is
a continuous algebra homomorphism which is a right inverse
of~$\alpha$. Then, setting $\tau(b) = (\sigma(\beta(b)), b)$ for each
$b\in\mathscr{B}$, we obtain a continuous algebra homomorphism
\mbox{$\tau\colon\mathscr{B}\to\mathscr{D}$}. The definition
of~$\delta$ implies that~$\tau$ is a right inverse of~$\delta$, and
hence~\eqref{ext3} splits strongly.

The proof just given applies equally to establish the other two cases.
\end{proof}

\begin{proof}[Proof of  Theorem~{\normalfont{\ref{mainthm}}}]
Our aim is to apply
Proposition~\ref{pullbacksandsplittingsofextensions} with $\mathscr{B}
= \mathscr{B}(E_{\text{R}})$, $\mathscr{C} = \ell_2(\N)^{\sim}$, and
$\beta = \psi$. Theorem~\ref{WEBEsplitexact} shows that, for these
choices, we have an extension of the form~\eqref{ext2} which splits
strongly.  

Let $q\colon\ell_1(\N)\to\ell_2(\N)$ be a bounded, linear
surjection. Then $\ker q$ is not complemented in~$\ell_1(\N)$ because
no (complemented) subspace of~$\ell_1(\N)$ is isomorphic
to~$\ell_2(\N)$. Equip~$\ell_1(\N)$ with the trivial product, let
$\mathscr{A} = \ell_1(\N)\oplus\K1$ be its unitization (defined
analogously to the unitization of~$\ell_2(\N)$, above), and define
$\alpha\colon\mathscr{A}\to \mathscr{C}$ by $\alpha(x+\lambda 1) =
q(x)+\lambda 1$ for $x\in\ell_1(\N)$ and $\lambda\in\K$. Then~$\alpha$
is a continuous, surjective algebra homomorphism with kernel~$\ker q$,
which is uncomplemented in~$\ell_1(\N)$ and hence in~$\mathscr{A}$, so
that we have a singular, non-admissible ex\-ten\-sion of the
form~\eqref{ext1}.

Being surjective, $q$ has a linear right inverse
$\rho\colon\ell_2(\N)\to\ell_1(\N)$, which is multiplicative be\-cause
$\ell_1(\N)$ and $\ell_2(\N)$ both have the trivial
product. Extend~$\rho$ to a linear map between the
uni\-ti\-za\-tions~$\mathscr{C}$ and~$\mathscr{A}$ by making it unital. Then
it is an algebra homomorphism which is a right inverse of~$\alpha$, so
that the extension~\eqref{ext1} splits algebraically. Hence
Proposition~\ref{pullbacksandsplittingsofextensions} pro\-duces a
singular extension~\eqref{ext3} of $\mathscr{B} =
\mathscr{B}(E_{\text{R}})$ which splits algebraically, but is not
admissible.
\end{proof}

Before we proceed to prove Theorem~\ref{mainthmH2BER}, let us recall
the formal definition of the second continuous Hochschild cohomology
group of a Banach algebra~$\mathscr{B}$ with coefficients in a Banach
$\mathscr{B}$\nobreakdash-bi\-module~$X$.  A \emph{$2$-cocycle} is a
bilinear map $\Upsilon\colon\mathscr{B}\times\mathscr{B}\to X$ which
satisfies
\[ a\cdot \Upsilon(b,c) - \Upsilon(ab,c) + \Upsilon(a,bc) - 
\Upsilon(a,b)\cdot c = 0\qquad (a,b,c\in\mathscr{B}). \] The set
$\mathscr{Z}^2(\mathscr{B}, X)$ of continuous $2$-cocycles forms a
closed subspace of the Banach space of continuous, bilinear maps
from~$\mathscr{B}\times\mathscr{B}$ into~$X$.  Each bounded, linear
map $\Omega\colon\mathscr{B}\to X$ induces a continuous $2$-cocycle by
the definition
\begin{equation}\label{coboundDefn}
\delta^1\Omega\colon\ (a,b)\mapsto a\cdot (\Omega b) -
\Omega(ab) + (\Omega a)\cdot b,\quad
\mathscr{B}\times\mathscr{B}\to X. \end{equation}
The $2$-cocyles of this form are called \emph{$2$-coboundaries;}
they form a (not necessarily closed) subspace
$\mathscr{N}^2(\mathscr{B}, X)$ of~$\mathscr{Z}^2(\mathscr{B}, X)$,
and so the quotient
\[ \mathscr{H}^2(\mathscr{B}, X) := 
\mathscr{Z}^2(\mathscr{B}, X)/\mathscr{N}^2(\mathscr{B}, X) \] is a
vector space, which is the \emph{second continuous Hochschild
  cohomology group} of~$\mathscr{B}$ with \emph{coefficients} in~$X$.

\begin{proof}[Proof of Theorem~{\normalfont{\ref{mainthmH2BER}}}]
By Theorem~\ref{WEBEsplitexact}, there are continuous algebra
homomorphisms
\[ \psi\colon 
\mathscr{B}(E_{\text{\normalfont{R}}})\to\ell_2(\N)^{\sim}
\qquad\text{and}\qquad \rho\colon
\ell_2(\N)^{\sim}\to\mathscr{B}(E_{\text{\normalfont{R}}}) \] such
that~$\rho$ is a right inverse of~$\psi$. The definition of the
unitization implies that we can find maps
$\psi_0\colon\mathscr{B}(E_{\text{\normalfont{R}}})\to\ell_2(\N)$ and
$\theta\colon\mathscr{B}(E_{\text{\normalfont{R}}})\to\mathbb{K}$ such
that \[ \psi(T) = \psi_0(T) + \theta(T)1\qquad
(T\in\mathscr{B}(E_{\text{\normalfont{R}}})). \]
We see that~$\theta$ is a continuous algebra
homomorphism, and $X = \mathbb{K}$ is a one-dimensional Banach
$\mathscr{B}(E_{\text{\normalfont{R}}})$-bimodule with respect to the
operations
\begin{equation}\label{onedimbimodDefns}
T\cdot \lambda = \theta(T)\lambda\qquad\text{and}\qquad \lambda \cdot
T = \theta(T)\lambda\qquad
(T\in\mathscr{B}(E_{\text{\normalfont{R}}}),\,\lambda\in
X). \end{equation} Moreover, $\psi_0$ is bounded and linear.
Consequently, for each $U\in\mathscr{B}(\ell_2(\N))$, we can define a
continuous, bilinear map $\Upsilon_U\colon
\mathscr{B}(E_{\text{\normalfont{R}}})\times
\mathscr{B}(E_{\text{\normalfont{R}}})\to X$ by
\begin{equation}\label{DefnUpsilonU} 
\Upsilon_U(S,T) = \langle U(\psi_0(S)),\psi_0(T)\rangle\qquad
(S,T\in\mathscr{B}(E_{\text{\normalfont{R}}})), \end{equation} where
$\langle\cdot,\cdot\rangle$ denotes the usual Banach-space duality
bracket on~$\ell_2(\N)$, that is, $\langle x,y\rangle =
\sum_{n=1}^\infty x_ny_n$ for $x=(x_n)$ and $y=(y_n)$
in~$\ell_2(\N)$. The map~$\psi_0$ is not multiplicative; more
precisely, since~$\ell_2(\N)$ has trivial multiplication, we have
\[ \psi_0(ST) = \theta(S)\psi_0(T) + \theta(T)\psi_0(S)\qquad
(S,T\in\mathscr{B}(E_{\text{\normalfont{R}}})). \] A straightforward
verification based on this identity shows that~$\Upsilon_U$ is a
$2$-cocycle.  Hence we have a map $\Upsilon\colon
U\mapsto\Upsilon_U,\,
\mathscr{B}(\ell_2(\N))\to\mathscr{Z}^2(\mathscr{B}(E_{\text{\normalfont{R}}}),
X)$, which is clearly linear. 

Suppose that~$\Upsilon_U$ is a $2$-coboundary for some
$U\in\mathscr{B}(\ell_2(\N))$, so that $\Upsilon_U = \delta^1\Omega$
for some bounded, linear map
$\Omega\colon\mathscr{B}(E_{\text{\normalfont{R}}})\to X$. 
Since~$\rho$ is a right inverse of~$\psi$, we see
that $\psi_0(\rho(x)) = x$ and $\theta(\rho(x)) = 0$ for each $x\in\ell_2(\N)$. 
Combining these identities with the
definitions~\eqref{coboundDefn}--\eqref{DefnUpsilonU}, we obtain
\[ \langle Ux, y\rangle = 
(\delta^1\Omega)(\rho(x),\rho(y)) = \theta(\rho(x))\Omega(\rho(y)) -
\Omega(\rho(x)\rho(y)) + \theta(\rho(y))\Omega(\rho(x)) = 0 \] for
each $x,y\in\ell_2(\N)$ because $\rho(x)\rho(y) = \rho(xy) = 0$. This
shows that $ U= 0$, so that~$0$ is the only $2$-coboundary in the
image of~$\Upsilon$.  Hence the composition of~$\Upsilon$ with the
quotient map
from~$\mathscr{Z}^2(\mathscr{B}(E_{\text{\normalfont{R}}}),X)$
onto~$\mathscr{H}^2(\mathscr{B}(E_{\text{\normalfont{R}}}),X)$ is a
linear injection from~$\mathscr{B}(\ell_2(\N))$
into~$\mathscr{H}^2(\mathscr{B}(E_{\text{\normalfont{R}}}),X)$, and
the result follows.
\end{proof} 

\begin{remark}
There is an underlying connection between Theorems~\ref{mainthm}
and~\ref{mainthmH2BER}. To explain it, consider two
extensions~\eqref{ext1} and~\eqref{ext2} of a Banach
algebra~$\mathscr{C}$, where the former extension is singular and
admissible, but does not split strongly, while the latter splits
strongly. Then, by
Proposition~\ref{pullbacksandsplittingsofextensions}, we obtain a
singular, admissible extension~\eqref{ext3} of the Banach
algebra~$\mathscr{B}$, and this extension does not split
strongly. Hence a classical result of Johnson (see
\cite[Theorem~2.1]{johnsonWed}, or \cite[Corollary~I.1.11]{helemskii}
for an exposition) implies that $\ker\delta$ is a Banach
$\mathscr{B}$-bimodule and $\mathscr{H}^2(\mathscr{B},\ker\delta)$ is
non-zero, so that~$\mathscr{B}$ has homological bidimension at least
two.

To apply this result to $\mathscr{B} =
\mathscr{B}(E_{\text{\normalfont{R}}})$, we take $\mathscr{C} =
\ell_2(\N)^{\sim}$ and $\beta = \psi$ as in the proof of
Theorem~\ref{mainthm}, so that we have an extension of the
form~\eqref{ext2} which splits strongly by
Theorem~\ref{WEBEsplitexact}. Choose $U\in\mathscr{B}(\ell_2(\N))$
with $\|U\|\le1$, and turn the vector space $\K\oplus\ell_2(\N)$ into
a Banach algebra by endowing it with the product and the norm
\[  (\lambda,x)(\mu,y) = (\langle Ux,y\rangle, 
0)\qquad\text{and}\qquad \|(\lambda,x)\| = |\lambda|+ \|x\|\qquad
(x,y\in\ell_2(\N),\,\lambda,\mu\in\K). \] Denote by~$\mathscr{A}$ the
unitization of this Banach algebra, and
let~$\alpha\colon\mathscr{A}\to\mathscr{C}$ be the natural unital
projection. Then $\alpha$ is a continuous, surjective algebra
homomorphism, and we have a singular, admissible extension of the
form~\eqref{ext1}, which can be shown to split algebraically if and
only if it splits strongly, if and only if $U=0$. Thus, choosing~$U$
non-zero, we conclude that~$\mathscr{B}(E_{\text{\normalfont{R}}})$
has homological bidimension at least two.

A similar argument shows
that~$\mathscr{B}(E_{\text{\normalfont{DLW}}})$ has homological
bidimension at least two, where $E_{\text{\normalfont{DLW}}}$ denotes
the Banach space of Dales, Loy, and Willis studied in~\cite{dlw}.  To
see this, take $\mathscr{B} =
\mathscr{B}(E_{\text{\normalfont{DLW}}})$ and $\mathscr{C} =
\ell_\infty(\Z)$, and apply \cite[Theorem~3.11(i)]{bdl} to obtain a
singular, admissible extension of~$\ell_\infty(\Z)$ which does not
split strongly.
\end{remark}

\subsection*{Acknowledgements} We are grateful to 
Garth Dales (Lancaster) and Zinaida Lykova (Newcastle) for having
drawn our attention to the questions of whether every extension
of~$\mathscr{B}(E)$ which splits algebraically also splits strongly,
and whether~$\mathscr{B}(E)$ may have homological bidimension at least
two, respectively, and for their comments on preliminary versions of
our work. We would also like to thank Yemon Choi (Lancaster) for
several helpful discussions.

\bibliographystyle{amsplain}

\end{document}